\numberwithin{equation}{section}
\begin{document}
\newtheorem*{thm*}{Theorem}
\newtheorem{theorem}{Theorem}[section]
\newtheorem{corollary}[theorem]{Corollary}
\newtheorem{lemma}[theorem]{Lemma}
\newtheorem{fact}[theorem]{Fact}
\newtheorem*{fact*}{Fact}
\newtheorem{proposition}[theorem]{Proposition}
\newtheorem{claim}[theorem]{Claim}

\newcounter{theoremalph}
\renewcommand{\thetheoremalph}{\Alph{theoremalph}}
\newtheorem{thmAlph}[theoremalph]{Theorem}
\theoremstyle{definition}
\newtheorem{question}[theorem]{Question}
\newtheorem{definition}[theorem]{Definition}

\theoremstyle{remark}
\newtheorem{remark}[theorem]{Remark}
\newtheorem{observation}[theorem]{Observation}
\newtheorem{example}[theorem]{Example}


\newcommand{\benjamin}[1]{\textcolor{red}{[#1]}}
\newcommand{\robin}[1]{\textcolor{blue}{[#1]}}
\newcommand{\yuri}[1]{\textcolor{brown}{[#1]}}

	\renewcommand{\ll}{\left\langle}
	\newcommand{\rr}{\right\rangle}
	\newcommand{\ls}{\left\{}
	\newcommand{\rs}{\right\}}
	\newcommand{\sm}{\setminus}
	\newcommand{\Mf}{\mathfrak}
	\newcommand{\Mbb}{\mathbb}
    \newcommand{\Mbf}{\mathbf}
    \newcommand{\Mcal}{\mathcal}
    \newcommand{\Mrm}{\mathrm}
	
	\newcommand{\mcA}{\ensuremath{\mathcal{A}}}
	\newcommand{\mcB}{\ensuremath{\mathcal{B}}}
	\newcommand{\mcC}{\ensuremath{\mathcal{C}}}
	\newcommand{\mcE}{\ensuremath{\mathcal{E}}}
	\newcommand{\mcF}{\ensuremath{\mathcal{F}}}
	\newcommand{\mcG}{\ensuremath{\mathcal{G}}}
	\newcommand{\mcH}{\ensuremath{\mathcal{H}}} 
	\newcommand{\mcI}{\ensuremath{\mathcal{I}}}
	\newcommand{\mcJ}{\ensuremath{\mathcal{J}}}
	\newcommand{\mcN}{\ensuremath{\mathcal{N}}}
	\newcommand{\mcP}{\ensuremath{\mathcal{P}}}
	\newcommand{\mcQ}{\ensuremath{\mathcal{Q}}}
 	\newcommand{\mcR}{\ensuremath{\mathcal{R}}}
	\newcommand{\mcS}{\ensuremath{\mathcal{S}}}
	\newcommand{\mcU}{\ensuremath{\mathcal{U}}}
	\newcommand{\mcV}{\ensuremath{\mathcal{V}}}
	\newcommand{\mcW}{\ensuremath{\mathcal{W}}}
	\newcommand{\mcZ}{\ensuremath{\mathcal{Z}}}
	\newcommand{\mbR}{\ensuremath{\mathbb{R}}}
	\newcommand{\mbQ}{\ensuremath{\mathbb{Q}}}
	\newcommand{\mbZ}{\ensuremath{\mathbb{Z}}}
    \newcommand{\mbN}{\ensuremath{\mathbb{N}}}
    \newcommand{\mbC}{\ensuremath{\mathbb{C}}}
    \newcommand{\mbbG}{\ensuremath{\mathbb{G}}}

	\newcommand{\on}[1]{\operatorname{#1}}
	\newcommand{\set}[1]{\ensuremath{ \left\lbrace #1 \right\rbrace}}	
	\newcommand{\grep}[2]{\ensuremath{\left\langle #1 \, \middle| \, #2\right\rangle}}
	\newcommand{\real}[1]{\ensuremath{\left\lVert #1\right\rVert}}
	\newcommand{\overbar}[1]{\mkern 2mu\overline{\mkern-2mu#1\mkern-2mu}\mkern 2mu}
	\newcommand{\into}{\hookrightarrow}
	\newcommand{\onto}{\twoheadrightarrow}
	\newcommand{\til}[1]{\widetilde{#1}}
	
	\newcommand{\Aut}{\ensuremath{\operatorname{Aut}}}
	\newcommand{\AutO}{\ensuremath{\operatorname{Aut}^0}}
	\newcommand{\Out}{\ensuremath{\operatorname{Out}}}
	\newcommand{\Outo}[1][A_\Gamma]{\ensuremath{\operatorname{Out}^0(#1)}}
	\newcommand{\Stab}{\operatorname{Stab}}
	\newcommand{\Sym}{\operatorname{Sym}}
	
	\newcommand{\op}{\operatorname{op}}
	\newcommand{\st}{\operatorname{st}}
	\newcommand{\lk}{\operatorname{lk}}
	\newcommand{\im}{\operatorname{im}}
	\newcommand{\rk}{\operatorname{rk}}
	\newcommand{\corank}{\operatorname{crk}}
	\newcommand{\spn}{\operatorname{span}}
	\newcommand{\Ind}{\operatorname{Ind}}

	\newcommand{\GL}[2]{\ensuremath{\operatorname{GL}_{#1}(#2)}}
	\newcommand{\SL}[2]{\ensuremath{\operatorname{SL}_{#1}(#2)}}
	\newcommand{\Sp}[2]{\ensuremath{\operatorname{Sp}_{#1}(#2)}}
	\newcommand{\CD}[2]{\ensuremath{\mathcal{G}_{#1}^{#2}}}
	\newcommand{\uCD}{\ensuremath{\mathcal{G}_\Phi^{{\rm sc}}}}
	\newcommand{\Addi}{\ensuremath{\mathbb{G}_{\mathrm{a}}}}
	\newcommand{\Multi}{\ensuremath{\mathbb{G}_{\mathrm{m}}}}
	
	\newcommand{\field}{\ensuremath{\mathbb{K}}} 
	\newcommand{\ring}{\ensuremath{\mathfrak{O}}} 
	\newcommand{\chevalley}{\ensuremath{\mathcal{G}}} 
	\newcommand{\torus}{\ensuremath{\mathcal{H}}} 
	\newcommand{\St}{\ensuremath{\mathtt{St}}} 
	\newcommand{\vcd}{\operatorname{vcd}} 
	\newcommand{\CC}{\operatorname{CC}} 
	\newcommand{\sign}{\operatorname{sign}} 
	\newcommand{\mfX}{\ensuremath{\mathfrak{X}}} 
	\newcommand{\isom}{f} 
	\newcommand{\isomtwo}{F} 
	
    \newcommand{\tA}{\ensuremath{\mathtt{A}}}
    \newcommand{\tB}{\ensuremath{\mathtt{B}}}
    \newcommand{\tC}{\ensuremath{\mathtt{C}}}
    \newcommand{\tD}{\ensuremath{\mathtt{D}}}
    \newcommand{\tE}{\ensuremath{\mathtt{E}}}
    \newcommand{\tF}{\ensuremath{\mathtt{F}_4}}
    \newcommand{\tG}{\ensuremath{\mathtt{G}_2}}

\title[Top-dimensional cohomology of Chevalley groups]{On the top-dimensional cohomology of arithmetic Chevalley groups}

\author{Benjamin Br\"uck}
\address{Universit\"at M\"unster, Institut f\"ur mathematische Logik und Grundlagenforschung, Einsteinstr.~62, 48149 M\"unster, Germany}
\email{benjamin.brueck@uni-muenster.de}

\author{Yuri Santos Rego} 
\address{University of Lincoln, Charlotte Scott Research Centre for Algebra, School of Mathematics and Physics, Brayford Pool, LN6 7TS Lincoln, United Kingdom}
\email{ysantosrego@lincoln.ac.uk}

\author{Robin J. Sroka}
\address{Department of Mathematics \& Statistics, McMaster University, Canada}
\curraddr{Mathematisches Institut, Universität Münster, Germany}
\email{robinjsroka@uni-muenster.de}

\thanks{BB and RJS were partially supported by the Deutsche Forschungsgemeinschaft (DFG, German Research Foundation) -- Project-ID 427320536 -- SFB 1442, as well as by Germany’s Excellence Strategy EXC 2044 -- 390685587, Mathematics Münster: Dynamics–Geometry–Structure. YSR was partially supported by the German Research Foundation (DFG) through the Priority Program 2026 `Geometry at infinity', Project~62, while working on this project at the OvGU Magdeburg. RJS was partially supported by the European Research Council (ERC grant agreement No.772960) and the Danish National Research Foundation (DNRF92, DNRF151) as a PhD Fellow at the University of Copenhagen, and by NSERC Discovery Grant A4000 in connection with a Postdoctoral Fellowship at McMaster University}

\begin{abstract}
    Let $\field$ be a number field with ring of integers $\ring$ and let $\chevalley$ be a Chevalley group scheme not of type $\tE_8$, $\tF$ or $\tG$. 
    We use the theory of Tits buildings and a result of T\'oth on Steinberg modules to prove that $H^{\on{vcd}}(\chevalley(\ring); \mbQ) = 0$ if $\ring$ is Euclidean.
\end{abstract}

\maketitle

\section{Introduction}

In this article, we obtain the following result about the cohomology of arithmetic groups:

\begin{theorem}
\label{thm_cohomology_vanishing_specific_types}
Let $\field$ be a number field, $\ring$ the ring of integers in $\field$ and $\chevalley$ a Chevalley--Demazure group scheme of type $\tA_n$, $\tB_n$, $\tC_n$, $\tD_n$, $\tE_6$ or $\tE_7$. If $\ring$ is Euclidean, then the rational cohomology of $\chevalley(\ring)$ vanishes in its virtual cohomological dimension $\on{vcd} = \on{vcd}(\chevalley(\ring))$,
\begin{equation*}
	H^{\on{vcd}}(\chevalley(\ring); \mbQ ) = 0.
\end{equation*}
\end{theorem}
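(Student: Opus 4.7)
The plan is to use Borel--Serre duality to reexpress the top cohomology as coinvariants of the Steinberg module, invoke T\'oth's theorem to generate these coinvariants by integral apartment classes, and then kill the remaining class using the sign action of the Weyl group on the apartment. In detail: as an arithmetic subgroup of a semisimple $\mbQ$-group, the group $\chevalley(\ring)$ is a virtual duality group of dimension $\vcd$ whose dualizing module is the Steinberg module $\St = \tilde H_{r-1}(\Delta(\chevalley(\field));\mbZ)$, i.e.\ the top reduced homology of the spherical Tits building of $\chevalley(\field)$, where $r$ is the absolute rank of $\chevalley$. Borel--Serre duality then yields
\[
  H^{\vcd}(\chevalley(\ring);\mbQ) \;\cong\; H_0\bigl(\chevalley(\ring);\, \St\otimes\mbQ\bigr),
\]
so the theorem reduces to showing the vanishing of the $\chevalley(\ring)$-coinvariants of $\St\otimes\mbQ$.

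Next, since the type of $\chevalley$ avoids $\tE_8$, $\tF$, $\tG$ and $\ring$ is Euclidean, T\'oth's generation theorem for Steinberg modules applies: it ensures that $\St$ is generated, as a $\chevalley(\ring)$-module, by the fundamental classes $[A_T]$ of the apartments corresponding to maximal $\ring$-split tori $T$ of $\chevalley$. Because $\ring$ is a PID, these integral apartments form a single $\chevalley(\ring)$-orbit (or at worst a small family that collapses under the Weyl action of the next step), so the coinvariants $(\St\otimes\mbQ)_{\chevalley(\ring)}$ are generated by the class $[A_0]$ of the standard apartment attached to the diagonal torus $\torus\subset\chevalley$.

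Finally, to kill this class, I exploit that $A_0$ is a Coxeter complex for $W = N(\torus)/\torus$ and that the $W$-action on $\tilde H_{r-1}(A_0)\cong\mbZ$ is the sign character $w\mapsto (-1)^{\ell(w)}$. In particular, any simple reflection $s_\alpha$ acts by $-1$, and the Chevalley construction supplies an integral lift $\widetilde{s_\alpha}\in\chevalley(\ring)$ (for instance $w_\alpha(1)$) that normalizes $\torus$ and stabilizes $A_0$. In the coinvariants one therefore has $[A_0] = [\widetilde{s_\alpha}\cdot A_0] = -[A_0]$, which forces $2[A_0]=0$ and hence $[A_0]=0$ after tensoring with $\mbQ$.

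The main obstacle I anticipate is the middle step: extracting from T\'oth's work the precise integral-apartment generation statement needed in the Euclidean case, and verifying the orbit reduction for maximal $\ring$-split tori under $\chevalley(\ring)$. The first step is standard from the Borel--Serre compactification of arithmetic groups, and the last step is a short sign-character argument; it is T\'oth's theorem that carries the essential content of the hypotheses ``type avoids $\tE_8$, $\tF$, $\tG$'' and ``$\ring$ Euclidean'', and whose correct packaging is where the real work of the proof sits.
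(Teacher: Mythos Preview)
Your proposal is correct and follows exactly the paper's three-step route: Borel--Serre duality to reduce to $\chevalley(\ring)$-coinvariants of $\St\otimes\mbQ$, T\'oth's theorem to generate $\St$ by integral apartment classes, and the sign action of an integral Weyl lift $w_\alpha\in\chevalley(\ring)$ to force $2[\Sigma]=0$ in the coinvariants.

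The one unnecessary detour is your phrasing of the middle step via maximal $\ring$-split tori and the ``orbit reduction'' you flag as the main obstacle. In the paper (and in T\'oth's own formulation) an \emph{integral apartment class} is by definition a $\chevalley(\ring)$-translate of the standard apartment class $[\Sigma]$, so T\'oth's theorem already says that $\St$ is a cyclic $\mbZ[\chevalley(\ring)]$-module generated by $[\Sigma]$; the coinvariants are then automatically generated by the image of $[\Sigma]$, and no separate conjugacy statement for $\ring$-split tori is needed. Equivalently (this is how the paper packages it as \cref{lem_inverting_apartment_class}), for any integral class $[\mcA]=\gamma_1\cdot[\Sigma]$ the conjugate $\gamma_1 w_\alpha \gamma_1^{-1}\in\chevalley(\ring)$ already sends $[\mcA]$ to $-[\mcA]$, so even your hedged ``small family'' scenario would be handled without appealing to a PID orbit argument.
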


As $\on{SL}_n$ and $\on{Sp}_{2n}$ are the simply-connected Chevalley--Demazure schemes of types $\tA_{n-1}$ and $\tC_n$, respectively, \cref{thm_cohomology_vanishing_specific_types} is a common generalisation of results of Lee--Szczarba~\cite{LS76} (for $\SL{n}{\ring}$) and Br\"uck--Patzt--Sroka~\cite[Chapter 5]{Sroka2021} (for $\Sp{2n}{\mbZ}$, building on work of Gunnells \cite{Gun:Symplecticmodularsymbols}).

There are two main ingredients in the proof of  \cref{thm_cohomology_vanishing_specific_types}. 
The first is work of Borel--Serre \cite{BS:Cornersarithmeticgroups} who proved that the groups in question are virtual Bieri--Eckmann duality groups. This allows one to study their high-dimensional rational cohomology by analysing their low-dimensional homology with coefficients in the so called Steinberg module.
The second ingredient is a result of  T\'oth \cite{Toth2005} that gives a generating set of this module. He shows that in the cases covered in \cref{thm_cohomology_vanishing_specific_types}, the Steinberg module is cyclic as a $\chevalley(\ring)$-module. This generalises results by Ash--Rudolph \cite{AR:modularsymbolcontinued} and Gunnells \cite{Gun:Symplecticmodularsymbols} in the cases of $\on{SL}_n$ and $\on{Sp}_{2n}$, respectively.

The Steinberg module can be described as the top-dimensional homology group of an associated Tits building. Previous vanishing results in the settings of $\on{SL}_{n}$ and $\on{Sp}_{2n}$ used explicit descriptions of the buildings that were specific for the corresponding types; see \cite[Sections~1.1 and~4]{Church2019} for type~$\tA$ and \cite[Chapter~5 and Definition~60]{Sroka2021} for type~$\tC$. We prove a building-theoretic generalisation of the key step in \cite[Section 4]{Church2019} for all types; see \cref{lem_inverting_apartment_class}. This enables us to show that cohomology vanishing in the virtual cohomological dimension always follows if one can show that the Steinberg module is generated by ``integral apartment classes''; see \cref{thm_cohomology_vanishing_general}. The generation by integral apartments classes, in turn, is the content of T\'oth's result.

There are two assumptions in  \cref{thm_cohomology_vanishing_specific_types}. The first is that $\chevalley$ be not of type $\tE_8$, $\tF$ or $\tG$. This is due to the same hypothesis in T\'oth's work and comes from the fact that, in these cases, there is no maximal parabolic subgroup whose unipotent radical is abelian \cite[Section~5]{Toth2005}. This makes certain computations harder in these cases \cite[second paragraph after Theorem 2]{Toth2005}. 
The second assumption is that $\ring$ be Euclidean, which is also a restriction in T\'oth's work. 
However, Euclideanity seems to be a natural assumption for a general statement in the style of \cref{thm_cohomology_vanishing_specific_types}. This is among other things indicated by work of Miller--Patzt--Wilson--Yasaki \cite{MPWY:Nonintegralitysome} who obtain non-vanishing results for $\chevalley = \on{SL}_n$ and certain non-Euclidean PIDs $\ring$.
The condition that $\ring$ should at least be a PID is necessary in a strong sense, at least for $\chevalley = \on{SL}_n$ \cite[Theorem D]{Church2019} and $\chevalley = \on{Sp}_{2n}$ \cite[Theorem 1.1]{Brueck2023b}.

In type $\tA$, for the group $\SL{n}{\ring}$, even stronger vanishing results are already known: Church--Putman \cite{CP:codimensiononecohomology} showed that the rational cohomology of this group vanishes also one degree below its virtual cohomological dimension if $\ring = \mbZ$, and Kupers--Miller--Patzt--Wilson \cite{Kupers2022} proved the same result for $\ring$ the Gaussian or Eisenstein integers. Br{\"u}ck--Miller--Patzt--Sroka--Wilson \cite{Brueck2022} extended this to vanishing of the rational cohomology two degrees below the virtual cohomological dimension for  $\ring = \mbZ$.
These results confirm parts of a conjecture by Church--Farb--Putman \cite{CFP:stabilityconjectureunstable} who asked whether it was generally true that
\begin{equation}
\label{eq_CFP}
    H^{\on{vcd}(\SL{n}{\mbZ})-i}(\SL{n}{\mbZ}; \mbQ ) = 0 \text{ if } i < n-1 = \rk(\on{SL}_n).
\end{equation}

In light of \cref{thm_cohomology_vanishing_specific_types}, one is tempted to ask whether vanishing behaviour similar to \cref{eq_CFP} might also occur for other arithmetic Chevalley groups.
\begin{question}
Let $\field$ be a number field, $\ring$ the ring of integers in $\field$ and $\chevalley$ a Chevalley--Demazure group scheme. If $\ring$ is Euclidean, is it true that
\begin{equation*}
	H^{\on{vcd}(\chevalley(\ring))-i}(\chevalley(\ring); \mbQ ) = 0 \text{ for all } i < \rk(\chevalley)?
\end{equation*}
\end{question}

Currently, evidence for such a vanishing pattern is given by \cref{thm_cohomology_vanishing_specific_types}, the above mentioned results in type $\tA$ and work of Br\"uck--Patzt--Sroka \cite{Brueck2023a} in type $\tC$ that shows that $H^{\on{vcd}(\Sp{2n}{\mbZ})-1}(\Sp{2n}{\mbZ};\mbQ)$ is trivial for $n\geq 2$.

{\small{
\subsection*{Acknowledgements} 
We are indebted to Petra~Schwer for helpful discussions, and to Paul~Gunnells for helpful comments and pointing us to~\cite{Toth2005}. We thank Peter Patzt, Jeremy Miller and Jennifer Wilson for comments on earlier versions of this article and Dan Yasaki for comments on computations in low dimensions. RJS would like to thank his PhD advisor Nathalie Wahl for enlightening conversations and helpful feedback about \cite[Chapter 5]{Sroka2021}. 
We thank the anonymous referee for their comments and suggestions.
}}

\section{Background}

\subsection{Coxeter groups and Coxeter complexes}

Given a finite set $S$, consider a symmetric matrix $M = (m_{s,t})_{s,t \in S}$ whose diagonal entries equal one and all other entries are $\infty$ or integers greater than one. A group $W$ with presentation
\[ W = \ll s \in S \mid (st)^{m_{s,t}} = 1 \text{ for all } s, t \text{ with } m_{s,t} < \infty \rr \]
is called a \emph{Coxeter group}. The pair $(W,S)$ is the corresponding \emph{Coxeter system}, and $S$ is the \emph{Coxeter generating set}. The \emph{rank} of the system $(W,S)$ is the cardinality $\vert S\vert$ of the given generating set. We write $\ell(w)$ for the word length of $w\in W$ with respect to the generating set $S$. The system $(W,S)$ is called \emph{spherical} if the underlying Coxeter group $W$ is finite. The reader is referred to standard textbooks, such as \cite{HumphreysLieRep,GeckPfeifferBook,AB:Buildings}, for further background on Coxeter groups. 

\begin{definition}[{\cite[Chapter~3]{AB:Buildings}}]
Let $(W,S)$ be a Coxeter system. Given $J \subsetneq S$, the subgroup $W_J := \ll J \rr \leq W$ is called a (proper) \emph{standard parabolic subgroup} of $W$. 
The \emph{Coxeter complex} of $(W,S)$, denoted by $\Sigma(W,S)$, is the simplicial complex whose vertices are the cosets (in $W$) of its maximal standard parabolic subgroups, and where $g_0 W_{S\setminus \ls s_0\rs}, \ldots, g_{k}W_{S \setminus \ls s_k\rs} $ span a simplex if and only if 
\begin{equation*}
    g_0 W_{S\setminus \ls s_0\rs} \cap \ldots \cap g_{k}W_{S \setminus \ls s_k\rs} \neq \varnothing.
\end{equation*}
Left multiplication on cosets induces an action of $W$ on $\Sigma(W,S)$. 
\end{definition}

\begin{lemma}[{\cite[Section~2.5 and Proposition~1.108]{AB:Buildings}}] \label{lem:FinitenessSphericalCase}
Suppose $(W,S)$ is spherical. Then, $\Sigma(W,S)$ is $W$-equivariantly homeomorphic to the unit $(|S|-1)$-sphere
in Euclidean space $\mbR^{\vert S\vert}$ where any (conjugate of a) Coxeter generator $s \in S$ of $W$ acts on  $\mbR^{\vert S\vert}$ as a linear reflection.
\end{lemma}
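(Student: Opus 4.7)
The plan is to construct the homeomorphism via the geometric (Tits) representation of $(W,S)$. Set $n := \vert S\vert$, let $V = \mbR^n$ have basis $(e_s)_{s \in S}$, and define the symmetric bilinear form $B$ on $V$ by $B(e_s, e_t) = -\cos(\pi/m_{s,t})$, with the convention $\pi/\infty = 0$. Each $s \in S$ acts on $V$ by the reflection $v \mapsto v - 2 B(e_s, v) e_s$, and these assignments extend to a faithful linear representation $W \into \on{GL}(V)$ (see, e.g., \cite{HumphreysLieRep}). Since $(W,S)$ is spherical, $W$ is finite, so averaging any inner product over $W$ yields a $W$-invariant positive definite form; a standard argument identifies this, up to scaling on each irreducible component, with $B$. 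Hence the image of $W$ lies in the orthogonal group of $(V,B)$, every generator $s$ acts as a linear orthogonal reflection, and the action restricts to the unit sphere $\mathbb{S}^{n-1} \subset V$.

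Next I would describe the induced cell structure on $\mathbb{S}^{n-1}$. For $s \in S$ set $H_s := \ker B(e_s,-)$; the $W$-translates $\{w H_s : w \in W,\ s \in S\}$ cut $\mathbb{S}^{n-1}$ into finitely many open spherical simplices. The closed fundamental chamber $\overline{C}$ bounded by the $H_s$ is a spherical $(n-1)$-simplex with facets $F_s := H_s \cap \overline{C}$ indexed by $s \in S$, and for each $J \subsetneq S$ the face $F_J := \bigcap_{s \in J} F_s$ has pointwise stabilizer exactly $W_J$. The translates $\{wF_J : w \in W,\ J \subsetneq S\}$ therefore form a $W$-equivariant triangulation of $\mathbb{S}^{n-1}$, whose vertex $v_s := F_{S \setminus \{s\}}$ opposite the facet $F_s$ satisfies $\on{Stab}_W(v_s) = W_{S\setminus\{s\}}$.

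Finally, I would match this triangulation with $\Sigma(W,S)$: the assignment $g W_{S\setminus\{s\}} \mapsto g \cdot v_s$ is well defined and $W$-equivariant on vertex sets, and a standard chamber/gallery argument shows that cosets $g_0 W_{S\setminus\{s_0\}}, \ldots, g_k W_{S\setminus\{s_k\}}$ have nonempty common intersection if and only if the vertices $g_0 v_{s_0}, \ldots, g_k v_{s_k}$ all lie in a common chamber, equivalently span a simplex of the spherical triangulation. Extending linearly produces the desired $W$-equivariant homeomorphism. The main technical obstacle is the second step, namely identifying the pointwise stabilizer of each face $F_J$ of $\overline{C}$ with the corresponding standard parabolic subgroup $W_J$; this is precisely what is covered by \cite[Section~2.5 and Proposition~1.108]{AB:Buildings}, and so once the Tits representation is in place the proof reduces to invoking these references.
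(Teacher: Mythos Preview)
The paper does not give its own proof of this lemma; it simply records the statement with a citation to \cite[Section~2.5 and Proposition~1.108]{AB:Buildings}. Your sketch via the geometric (Tits) representation, the induced hyperplane arrangement on the unit sphere, and the identification of face stabilizers with standard parabolics is exactly the standard argument found in that reference, so there is nothing to compare: you have correctly reconstructed the proof that the paper is invoking by citation.
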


The Coxeter complex $\Sigma(W,S)$ has a distinguished simplex $C$ of maximal dimension $(|S|-1)$, which is called the \emph{fundamental chamber} of $\Sigma(W,S)$ and corresponds to the intersection of all maximal standard parabolic subgroups,
\begin{equation*}
    C = \{W_{S\setminus \{s\}} \mid s \in S\}.
\end{equation*}
\cref{lem:FinitenessSphericalCase} justifies the following definition.

\begin{definition} \label{def:coxeter-standardapartmentclass}
Let $(W,S)$ be a spherical Coxeter system.
The \emph{standard apartment class} of $\Sigma(W, S)$ is the generator 
    $$[\Sigma(W,S)] \in \til{H}_{|S|-1}(\Sigma(W,S); \mbZ)\cong \mbZ$$
    with underlying chain
    $$\sum_{w \in W} (-1)^{\ell(w)}w\cdot C \ \in \til{C}_{|S|-1}(\Sigma(W,S); \mbZ).$$
\end{definition}

\begin{corollary} \label{lem:action-reverses-orientation}
In the setting of \cref{lem:FinitenessSphericalCase}, any Coxeter generator $s \in S$ of $W$ acts by $(-1)$ on the standard apartment class of $\Sigma(W, S)$,
$$s \cdot [\Sigma(W, S)] = - [\Sigma(W, S)].$$
\end{corollary}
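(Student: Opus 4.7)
The plan is to compute the action of $s$ on $[\Sigma(W,S)]$ directly from the chain-level formula in \cref{def:coxeter-standardapartmentclass}. Since the $W$-action on $\Sigma(W,S)$ is induced by left multiplication on cosets, it carries the ordered vertex tuple of the fundamental chamber covariantly: writing $C$ as the oriented simplex with vertices $(W_{S\sm\{s_i\}})_{s_i \in S}$ in some fixed order, one has $s\cdot(w\cdot C) = (sw)\cdot C$ for every $w \in W$ at the level of oriented chains (the $W$-action permutes the indexing set $S$ of vertices trivially; it only moves the cosets).

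Applying this termwise and re-indexing the sum by $w' := sw$ gives
\begin{equation*}
  s\cdot[\Sigma(W,S)] \;=\; \sum_{w\in W}(-1)^{\ell(w)} (sw)\cdot C \;=\; \sum_{w'\in W}(-1)^{\ell(sw')}\, w'\cdot C.
\end{equation*}
The final step is to invoke a standard property of Coxeter systems: for every $s \in S$ and $w \in W$, one has $\ell(sw) = \ell(w)\pm 1$, so $(-1)^{\ell(sw')} = -(-1)^{\ell(w')}$. Factoring out this sign yields $s\cdot[\Sigma(W,S)] = -[\Sigma(W,S)]$.

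I do not expect any serious obstacle. The main sanity check is that $W$ acts simply transitively on the top-dimensional simplices (chambers) of $\Sigma(W,S)$, so the re-indexing $w \mapsto sw$ is a genuine bijection $W \to W$ without cancellation artefacts; this is a standard feature of Coxeter complexes and is implicit in \cref{def:coxeter-standardapartmentclass}. As an alternative route that sidesteps the chain-level bookkeeping, one could appeal directly to \cref{lem:FinitenessSphericalCase}: the generator $s$ acts on $\mbR^{|S|}$ as a linear reflection, hence has determinant $-1$, and so acts by $(-1)$ on the fundamental class of the unit $(|S|-1)$-sphere $\real{\Sigma(W,S)}$.
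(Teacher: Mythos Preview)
Your chain-level argument is correct, and so is the alternative geometric route you sketch at the end. The paper itself states this result without proof, as an immediate corollary of \cref{lem:FinitenessSphericalCase}; the intended argument is precisely your second one: a linear reflection on $\mbR^{|S|}$ has determinant $-1$ and hence acts by $-1$ on the top homology of the unit $(|S|-1)$-sphere.

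Your primary approach is a genuinely different, more combinatorial route. By working directly with the chain representative from \cref{def:coxeter-standardapartmentclass} and using the standard identity $\ell(sw)=\ell(w)\pm 1$, you avoid invoking the geometric realisation altogether. This has the small advantage of being entirely internal to the simplicial description of the Coxeter complex (and implicitly verifies that the chain in \cref{def:coxeter-standardapartmentclass} really is a cycle and transforms correctly), whereas the paper's intended route is a one-liner but leans on the identification of that chain with the fundamental class of the sphere. Both are perfectly adequate here.
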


\subsection{Chevalley groups and their Weyl groups} \label{sec:Chevalley}
We briefly introduce the group schemes that will come up in our work. All of the material presented in this section is standard and can be found in multiple seminal works on the topic, such as~\cite{ChevalleyTohoku,Ree,Kostant,Steinberg}. We shall mostly follow Steinberg's notation~\cite{Steinberg}.

Let $\Phi$ be a (reduced, crystallographic) irreducible root system --- such root systems have been classified and form the seven classical types $\tA_n$, $\tB_n$, $\tC_n$, $\tD_n$, $\tE_6$, $\tE_7$, $\tE_8$, $\tF$ and $\tG$; cf.~\cite{HumphreysLieRep}. Let $\Mf{g}_\Phi$ be the corresponding complex simple Lie algebra and $\Lambda$ the lattice of weights~\cite{HumphreysLieRep} of some complex representation $\Mf{g}_\Phi \to \Mf{gl}(V)$. By the work of Chevalley~\cite{ChevalleyTohoku} (independently by Ree~\cite{Ree}) and Demazure~\cite{SGA3.3}, one can construct from such data $(\Phi, \Lambda)$ a unique representable functor~\cite{Kostant,Waterhouse} sending commutative unital rings to groups. We denote this functor by $\mcG_\Phi^\Lambda$ and call it a \emph{Chevalley--Demazure group scheme of type $\Phi$}. Given a (commutative) ring $R$ (with unity), we call the group of $R$-points $\mcG_\Phi^\Lambda(R)$ a Chevalley--Demazure group, or more briefly a \emph{Chevalley group} (over $R$). 

Denoting by $\Lambda_{\Mrm{sc}}$ (resp. by $\Lambda_{\Mrm{ad}}$) the full lattice of weights (resp. the root lattice $\spn_\mbZ(\Phi)$) of $\Mf{g}_\Phi$, we have $\Lambda_{\Mrm{sc}} \subseteq \Lambda \subseteq \Lambda_{\Mrm{ad}}$. 
These containments are reflected on groups: there exist central isogenies $\mcG_\Phi^{\Lambda_{\Mrm{sc}}} \xrightarrow{p}{\mcG_\Phi^\Lambda} \xrightarrow{q}{\mcG_\Phi^{\Lambda_\Mrm{ad}}}$. (The `largest' scheme on the left is \emph{universal} or \emph{simply-connected}, and the `smallest' on the right is \emph{adjoint}.) 
The following are typical examples of (infinite) families of Chevalley--Demazure groups.
\[\mcG_{\tA_{n}}^{\Lambda_\Mrm{sc}} = \Mrm{SL}_{n+1}, \, \, \, 
\mcG_{\tA_{n}}^{\Lambda_\Mrm{ad}} = \mathbb{P}\Mrm{GL}_{n+1}, \, \, \,
\mcG_{\tB_n}^{\Lambda_\Mrm{sc}} = \Mrm{Spin}_{2n+1}, \, \, \, 
\mcG_{\tC_n}^{\Lambda_\Mrm{sc}} = \Mrm{Sp}_{2n}, \, \, \,
\mcG_{\tD_n}^\Lambda = \Mrm{SO}_{2n}.\]
For the remainder of the paper we usually omit the root system and the lattice of weights and write $\chevalley := \mcG_\Phi^\Lambda$ to simplify notation.

Fixing an arbitrary total ordering on $\Phi$ gives rise to a subset of simple roots $\Delta$. That is, every $\alpha \in \Phi$ is a unique $\mbZ$-linear combination of elements of $\Delta$, in a way that the coefficients are all either positive or negative. In particular, $\Delta$ allows us to define the set $\Phi^+$ of positive roots --- i.e., those roots whose coefficients with respect to $\Delta$ are all positive. Similarly, $\Phi^- = -\Phi^+$. The \emph{rank} of $\Phi$, denoted $\rk(\Phi)$, is the cardinality of $\Delta$ --- it does not depend on the choice of a subset of simple roots. The \emph{rank} of $\chevalley = \mcG_\Phi^\Lambda$ is defined as $\rk(\Phi)$.

A choice of subset of simple roots $\Delta \subset \Phi$ gives rise to a $\mbZ$-subscheme in $\chevalley=\mcG_\Phi^\Lambda$ that is isomorphic to $\Multi^{\rk(\Phi)}$, where $\Multi$ denotes the multiplicative $\mbZ$-group scheme $\Multi(R) \cong (R^\times,\cdot)$, cf.~\cite[Expos\'e~XXII]{SGA3.3}. We denote such a subgroup by $\mcH \leq \chevalley$ and call it the \emph{standard ($\mbZ$-split) maximal torus} of $\chevalley$.

The structure of Chevalley--Demazure groups is very much constrained by certain subgroups determined by roots. Given $\alpha \in \Phi^+$, one constructs an embedding over $\mbZ$ of the group scheme $\left( \begin{smallmatrix} 1 & \ast \\ 0 & 1 \end{smallmatrix}\right)$ into $\chevalley=\mcG_\Phi^\Lambda$ that is normalised by the torus $\mcH \leq \chevalley$. The image of this subscheme is denoted by $\mfX_\alpha \leq \chevalley$. Similarly, the opposite root $-\alpha$ gives a $\mbZ$-embedding $\left( \begin{smallmatrix} 1 & 0 \\ \ast & 1 \end{smallmatrix}\right) \into \chevalley$, whose image is denoted by $\mfX_{-\alpha}$ and is also normalised by $\mcH$. The subschemes $\mfX_\alpha$ and $\mfX_{-\alpha}$ are called \emph{unipotent root subgroups} of $\chevalley$. We remark that both are isomorphic to the additive $\mbZ$-group scheme $\Addi(R)\cong (R,+)$. Turning to the group of $R$-points, a \emph{unipotent root element} attached to $\alpha \in \Phi^+$ is the image $x_\alpha(r) \in \mfX_{\alpha}(R)$ of the unipotent matrix $\left(\begin{smallmatrix} 1 & r \\ 0 & 1 \end{smallmatrix} \right)$ under the above mentioned embedding, where $r \in R$. (Similarly for $-\alpha$ with $\left(\begin{smallmatrix} 1 & 0 \\ r & 1 \end{smallmatrix} \right)$.) 

The \emph{Weyl group} $N(\mcH)/\mcH$ of a Chevalley--Demazure group scheme $\chevalley$ is the quotient of $N(\mcH) \leq \chevalley$ --- i.e., the normaliser of $\mcH$ in $\chevalley$ --- by the torus $\mcH \leq \chevalley$. As the notation suggests, this construction is functorial; cf.~\cite[Expos\'e XXII.3]{SGA3.3}. On the other hand, the root system $\Phi$ also gives rise to a Coxeter group $W_\Phi$~\cite{HumphreysLieRep}, which has the same classical type as $\Phi$. This group acts by reflections on $\mbR^{\rk(\Phi)} = \mbR^{\vert \Delta\vert}$ preserving the set of roots $\Phi \subset \mbR^{\rk(\Phi)}$. (Here we interpret the set of simple roots $\Delta$ as forming a basis for $\mbR^{\rk(\Phi)}$.)
Denote by $S=\{s_\alpha \mid \alpha \in \Delta\}$ the Coxeter generating set of $W_\Phi$. The relationship between these two groups is described in the sequel. Given an arbitrary base ring $R$, define for each $\alpha \in \Phi$  
\begin{equation*}
    w_\alpha := x_\alpha(1) x_{-\alpha}(1)^{-1} x_\alpha(1)  \in N(\mcH)(R) \leq \chevalley(R),
\end{equation*} 
which is the image of $\left( \begin{smallmatrix} 0 & 1 \\ -1 & 0 \end{smallmatrix}\right)$ under the embedding $\ll \left( \begin{smallmatrix} 1 & \ast \\ 0 & 1 \end{smallmatrix}\right), \left( \begin{smallmatrix} 1 & 0 \\ \ast & 1 \end{smallmatrix}\right) \rr \into \ll \mfX_\alpha, \mfX_{-\alpha}\rr \leq \chevalley$. 

\begin{theorem}[{Chevalley~\cite[\textsection{}III]{ChevalleyTohoku},  Demazure~\cite[Expos\'e XXII.3]{SGA3.3}}] \label{thm:Weylgroup} 
Let $R$ be an arbitrary commutative unital ring. 
There is an isomorphism
    \begin{equation*}
        \isom: N(\mcH)(R)/\mcH(R) \to W_\Phi
    \end{equation*}
that maps $w_\alpha \mcH(R)$ to $s_\alpha$ for all $\alpha \in \Delta$.
\end{theorem}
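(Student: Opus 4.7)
The plan is to construct $\isom$ via the conjugation action of $N(\mcH)$ on $\mcH$. For any $n \in N(\mcH)(R)$, conjugation by $n$ is an $R$-group scheme automorphism of $\mcH$ and hence induces an automorphism of the character lattice $X^*(\mcH) \cong \Lambda$. Since conjugation by $n$ in $\chevalley(R)$ permutes the unipotent root subgroups $\mfX_\alpha$ among themselves (each $\mfX_\alpha$ being the $\alpha$-weight space for the adjoint action of $\mcH$), this automorphism of $\Lambda$ preserves the finite set $\Phi$, and I would then use the standard structural fact that its restriction to $\spn_\mbR(\Phi)$ lies in $W_\Phi \subseteq \Aut(\Phi)$. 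Modding out by $\mcH(R)$, which is abelian and acts trivially on $\mcH$ by conjugation, yields a homomorphism $\isom \colon N(\mcH)(R)/\mcH(R) \to W_\Phi$.

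Next I would verify $\isom(w_\alpha \mcH(R)) = s_\alpha$ by an explicit $\SL{2}{R}$-computation inside $\ll \mfX_\alpha, \mfX_{-\alpha}\rr$. The element $w_\alpha = x_\alpha(1)x_{-\alpha}(1)^{-1}x_\alpha(1)$ is the image of $\left(\begin{smallmatrix} 0 & 1 \\ -1 & 0 \end{smallmatrix}\right)$, and a direct matrix calculation gives
\begin{equation*}
  \begin{pmatrix} 0 & 1 \\ -1 & 0 \end{pmatrix} \begin{pmatrix} t & 0 \\ 0 & t^{-1} \end{pmatrix} \begin{pmatrix} 0 & -1 \\ 1 & 0 \end{pmatrix} = \begin{pmatrix} t^{-1} & 0 \\ 0 & t \end{pmatrix},
\end{equation*}
which on characters is precisely the simple reflection $s_\alpha$. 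Since $\{s_\alpha \mid \alpha \in \Delta\}$ generates $W_\Phi$, the map $\isom$ is surjective.

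For injectivity, I would argue as follows: if $\isom(n\mcH(R)) = 1$, then conjugation by $n$ acts trivially on $\mcH$, so $n$ lies in the scheme-theoretic centraliser $C_\chevalley(\mcH)(R)$. Here I would invoke the Chevalley--Demazure structural identity $C_\chevalley(\mcH) = \mcH$ at the level of $\mbZ$-group schemes, established for split reductive group schemes in \cite{SGA3.3}. Passing to $R$-points yields $n \in \mcH(R)$, completing the verification.

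The main obstacle is precisely the scheme-theoretic identity $C_\chevalley(\mcH) = \mcH$ over $\mbZ$, together with the closely related assertion that the image of the conjugation map lies in $W_\Phi$ rather than merely in $\Aut(\Phi)$. Over an algebraically closed field both follow quickly from the root space decomposition of $\Mf{g}_\Phi$ and the Bruhat decomposition, but the uniform statement over an arbitrary commutative ring $R$ requires the full machinery of the Chevalley--Demazure construction in \cite{ChevalleyTohoku,SGA3.3}. Once these are granted, the rest of the proof reduces to the explicit rank-one matrix relations recalled above.
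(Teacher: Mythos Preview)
The paper does not give its own proof of this theorem: it is stated with attribution to Chevalley and Demazure and used as a black box. So there is no ``paper's proof'' to compare your proposal against; you are supplying content beyond what the paper attempts.

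That said, your sketch has a genuine gap at the step ``verify $\isom(w_\alpha \mcH(R)) = s_\alpha$ by an explicit $\SL{2}{R}$-computation inside $\ll \mfX_\alpha, \mfX_{-\alpha}\rr$''. The map $\isom$ is defined by the conjugation action of $n$ on \emph{all} of $\mcH$, equivalently on the full character lattice $\Lambda$. Your matrix identity only computes how $w_\alpha$ conjugates the one-parameter subgroup $h_\alpha(t)$ coming from the coroot $\alpha^\vee$; it says nothing about how $w_\alpha$ acts on $h_\beta(t)$ for $\beta \neq \alpha$, and hence does not by itself pin down the induced automorphism of $\Lambda$ as $s_\alpha$. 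What is actually needed here is Chevalley's cross-relation
\[
  w_\alpha\, x_\beta(t)\, w_\alpha^{-1} = x_{s_\alpha(\beta)}(c_{\alpha,\beta}\, t) \quad (\beta \in \Phi),
\]
or equivalently $w_\alpha\, h_\beta(t)\, w_\alpha^{-1} = h_{s_\alpha(\beta)}(t)$, which genuinely lives outside the rank-one subgroup and is precisely the content of the cited passages in \cite{ChevalleyTohoku,Steinberg}. Once this relation is in hand, it simultaneously gives you $\isom(w_\alpha\mcH(R)) = s_\alpha$ and the fact that the image of $\isom$ lands in $W_\Phi$ rather than merely in $\Aut(\Phi)$, resolving the obstacle you flag at the end. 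Your injectivity argument via $C_\chevalley(\mcH) = \mcH$ is the right idea and is exactly what \cite[Exp.~XXII]{SGA3.3} supplies.
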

We write $\til{S} = \{ w_\alpha \in \chevalley(R) = \mcG_\Phi^\Lambda(R) \mid \alpha \in \Delta \}$ and denote by $\til{W} = \langle \til{S} \rangle \leq \chevalley(R)$ the subgroup of $\chevalley(R)$ generated by $\til{S}$. The group $\til{W}$ is called the \emph{extended Weyl group} of $\Phi$.

\subsection{The spherical Building of a Chevalley group}
We now specialise to the case where $R=\field$ is a field.
Following Tits~\cite{TitsBN}, every Chevalley group $\chevalley(\field)$ gives rise to a highly-symmetrical simplicial complex on which $\chevalley(\field)$ acts with strong transitivity properties. As before, we let $\mcH$ be an arbitrary, but fixed, maximal split torus of $\chevalley = \mcG_\Phi^\Lambda$. 

\begin{definition}
The \emph{standard Borel subgroup} of $\chevalley(\field)=\mcG_\Phi^\Lambda(\field)$ is the subgroup 
\[\mcB(\field) = \ll \mcH(\field), \mfX_{\alpha}(\field) \mid \alpha \in \Phi^+ \rr \leq \chevalley(\field).\]
The (proper) \emph{standard parabolic subgroups} of $\chevalley(\field)$ are the proper subgroups 
that contain $\mcB(\field)$. The \emph{spherical} (or \emph{Tits}) building of $\chevalley(\field)$, denoted by $\Delta(\chevalley(\field))$, is the simplicial complex whose vertices are the cosets of maximal standard parabolic subgroups of $\chevalley(\field)$, and $g_0 \mcP_0(\field), \ldots, g_{k}\mcP_{k}(\field)$ span a simplex if and only if 
\[g_0 \mcP_0(\field) \cap \ldots \cap g_{k}\mcP_{k}(\field) \neq \varnothing.\]
Left multiplication on cosets induces an action of $\chevalley(\field)$ on $\Delta(\chevalley(\field))$.
\end{definition}

It should be stressed that, due to the central isogenies $\mcG_\Phi^{\Lambda_{\Mrm{sc}}} \xrightarrow{p} \mcG_\Phi^\Lambda \xrightarrow{q} \mcG_\Phi^{\Lambda_\Mrm{ad}}$, the spherical building for $\chevalley(\field)=\mcG_\Phi^\Lambda(\field)$ depends only on the ground field $\field$ and on the root system $\Phi$, but not on the lattice of weights $\Lambda$; cf. \cite[Proposition~5.4]{TitsBN}. That is, $\Delta(\mcG_\Phi^{\Lambda_{\Mrm{sc}}}(\field)) \cong \Delta(\chevalley(\field)) \cong \Delta(\mcG_\Phi^{\Lambda_{\Mrm{ad}}}(\field))$.  Moreover, as the centre of $\chevalley(\field)$ acts trivially on $\Delta(\chevalley(\field))$ by definition, the action of an element $g \in \mcG_\Phi^\Lambda(\field)$ on $\Delta(\chevalley(\field))$ coincides with that of its image $q(g) \in \mcG_\Phi^{\Lambda_{\Mrm{ad}}}(\field)$. 

It is immediate from the above definition that $\Delta(\chevalley(\field))$ contains a canonical maximal simplex $C$, called the \emph{fundamental chamber}, corresponding to the intersection of all maximal standard parabolic subgroups.
Let $\til{W}$ denote the extended Weyl group defined after \cref{thm:Weylgroup}. We call the subcomplex $\Sigma\subset \Delta(\chevalley(\field))$ spanned by $\{ \til{w}C \mid \til{w} \in \til{W} \}$ the \emph{standard apartment} in the Tits building $\Delta(\chevalley(\field))$. Note that the action of $\chevalley(\field)$ on $\Delta(\chevalley(\field))$ restricts to an action of $\til{W}$ on $\Sigma$.

\begin{theorem}[{Tits~\cite[Theorem~5.2]{TitsBN}}] \label{thm:PropertiesBuilding}
Let $\isom: N(\mcH)(\field)/\mcH(\field) \to W_\Phi$ be the isomorphism of \cref{thm:Weylgroup}.
There is an isomorphism of simplicial complexes
$$\isomtwo: \Sigma \to \Sigma(W_\Phi,S)$$
between the standard apartment $\Sigma$ and the Coxeter complex $\Sigma(W_\Phi,S)$ such that for each $\til{w}\in \til{W}$, the action of $\til{w}$ on $\Sigma$ agrees with the action of $\isom(\til{w} \mcH(\field))$ on $ \Sigma(W_\Phi,S)$.
\end{theorem}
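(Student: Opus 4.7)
The plan is to define $\isomtwo$ directly on vertices and extend it simplicially. Every vertex of $\Sigma$ is of the form $\til{w}\cdot \mcP_{\hat\alpha}(\field)$ where $\til{w}\in\til{W}$, $\alpha\in\Delta$, and $\mcP_{\hat\alpha}(\field)$ denotes the unique maximal standard parabolic subgroup of $\chevalley(\field)$ corresponding to the simple root $\alpha$; thus the fundamental chamber is $C=\{\mcP_{\hat\alpha}(\field):\alpha\in\Delta\}$. Correspondingly, every vertex of $\Sigma(W_\Phi,S)$ has the form $w\cdot W_{S\setminus\{s_\alpha\}}$. I would set
\[
\isomtwo\bigl(\til{w}\cdot\mcP_{\hat\alpha}(\field)\bigr) := \isom\bigl(\til{w}\mcH(\field)\bigr)\cdot W_{S\setminus\{s_\alpha\}}.
\]

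The first main step is to verify that this assignment is well-defined and injective on vertices. For $\til{w}_1,\til{w}_2\in\til{W}$, the equalities $\til{w}_1\mcP_{\hat\alpha}(\field)=\til{w}_2\mcP_{\hat\alpha}(\field)$ and $\isom(\til{w}_1\mcH(\field))W_{S\setminus\{s_\alpha\}}=\isom(\til{w}_2\mcH(\field))W_{S\setminus\{s_\alpha\}}$ amount, respectively, to membership of $\til{w}_1^{-1}\til{w}_2$ in $N(\mcH)(\field)\cap\mcP_{\hat\alpha}(\field)$ and in $\mcH(\field)\cdot\isom^{-1}(W_{S\setminus\{s_\alpha\}})$. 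Both conditions are thus equivalent once one establishes the identity
\[
N(\mcH)(\field)\cap\mcP_{\hat\alpha}(\field) \;=\; \mcH(\field)\cdot\isom^{-1}\bigl(W_{S\setminus\{s_\alpha\}}\bigr),
\]
which follows from the Bruhat decomposition of $\chevalley(\field)$ together with the description $\mcP_{\hat\alpha}(\field)=\mcB(\field)\cdot\isom^{-1}(W_{S\setminus\{s_\alpha\}})\cdot\mcB(\field)$. Surjectivity of $\isomtwo$ on vertices is then immediate from the surjectivity of $\isom$ in \cref{thm:Weylgroup}.

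Next I would check that $\isomtwo$ is an isomorphism of simplicial complexes. By construction, the vertex bijection sends the vertex set of the chamber $\til{w}C\subset\Sigma$ to the vertex set of the chamber $\isom(\til{w}\mcH(\field))\cdot\{W_{S\setminus\{s_\alpha\}}:\alpha\in\Delta\}\subset\Sigma(W_\Phi,S)$. Since both $\Sigma$ and $\Sigma(W_\Phi,S)$ are covered by the $\til{W}$- and $W_\Phi$-translates of their fundamental chambers, and every simplex is a face of some chamber, this upgrades to a simplicial bijection. The equivariance statement $\isomtwo(\til{u}\cdot\sigma)=\isom(\til{u}\mcH(\field))\cdot\isomtwo(\sigma)$ for $\til{u}\in\til{W}$ follows immediately from the left multiplication formula defining $\isomtwo$.

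The main obstacle is the intersection identity $N(\mcH)(\field)\cap\mcP_{\hat\alpha}(\field)=\mcH(\field)\cdot\isom^{-1}(W_{S\setminus\{s_\alpha\}})$. This ultimately rests on the fact that $(\mcB(\field),N(\mcH)(\field))$ forms a Tits system (\textit{BN}-pair) in $\chevalley(\field)$ with associated Weyl group $W_\Phi$ and Coxeter generating set $S$, which is the content of Chevalley's structural theorem~\cite{ChevalleyTohoku}. Once this is granted, the remaining verifications are formal.
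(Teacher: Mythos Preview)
The paper does not give a proof of this statement; it simply records it as a theorem of Tits with the citation \cite[Theorem~5.2]{TitsBN}, so there is nothing to compare your argument against.

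That said, your outline is the standard direct construction and is essentially correct. The only place to be slightly more careful is the passage from ``$\isomtwo$ sends vertex sets of chambers to vertex sets of chambers'' to ``$\isomtwo$ is a simplicial isomorphism''. You want to argue in both directions: a set of vertices of $\Sigma$ spans a simplex if and only if their images span a simplex in $\Sigma(W_\Phi,S)$. Your observation that every simplex is a face of some chamber handles the forward direction, and the same argument applied to the inverse vertex bijection (which is well-defined once you have established bijectivity) handles the backward direction; it is worth saying this explicitly. The intersection identity $N(\mcH)(\field)\cap\mcP_{\hat\alpha}(\field)=\mcH(\field)\cdot\isom^{-1}(W_{S\setminus\{s_\alpha\}})$ is indeed the crux, and you are right that it is a formal consequence of the $BN$-pair axioms (specifically, uniqueness in the Bruhat decomposition together with the standard description of parabolic subgroups as $\mcB(\field) W_J \mcB(\field)$).
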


Recall that the Coxeter complex $\Sigma(W_\Phi,S)$ is a simplicial $(\vert S\vert-1)$-sphere (see \cref{lem:FinitenessSphericalCase}). \cref{thm:PropertiesBuilding} therefore allows us to associate a unique homology class to the standard apartment $\Sigma$ of $\Delta(\chevalley(\field))$ by pulling back the standard apartment class $[\Sigma(W_\Phi,S)] \in \til{H}_{\rk(\Phi)-1}(\Sigma(W_\Phi,S);\mbZ)$ of the Coxeter complex (see \cref{def:coxeter-standardapartmentclass}). Namely, fixing for each $w \in W_\Phi$ a representative $\til{w}$ for $w$ in $\til{W} \leq \chevalley(\field)$, the \emph{standard apartment class} $[\Sigma]$ in the Tits building is
\begin{equation*}
    [\Sigma] = \isomtwo^{-1}_*[\Sigma(W_\Phi,S)]= \left[\sum_{w \in W_\Phi} (-1)^{\ell(w)}\til{w}\cdot C \right] \in \til{H}_{\rk(\Phi)-1}(\Delta(\chevalley(\field));\mbZ).
\end{equation*}
The Solomon--Tits Theorem says that the $\chevalley(\field)$-translates of this class generate the entire homology of the building:
\begin{theorem}[{Solomon--Tits~\cite{Sol:Steinbergcharacterfinite}}]
\label{thm:SolomonTits} 
For all $i\not = \rk(\Phi)-2$, the reduced homology $\til{H}_{i}(\Delta(\chevalley(\field));\mbZ)$ is trivial. 
The $\mbZ[\chevalley(\field)]$-module $\St := \til{H}_{ \rk(\Phi)-1}(\Delta(\chevalley(\field));\mbZ)$ is generated (as an abelian group) by $\chevalley(\field)$-translates of $[\Sigma]$. 
\end{theorem}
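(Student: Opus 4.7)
The approach is the classical Solomon--Tits argument, applied in the abstract setting of spherical Tits buildings. The key observation is that $\Delta(\chevalley(\field))$ is a thick spherical building on which $\chevalley(\field)$ acts strongly transitively; this follows from the Bruhat decomposition, which endows $(\mcB(\field), N(\mcH)(\field))$ with a BN-pair structure of type $W_\Phi$ in the sense of Tits. In particular, every apartment of $\Delta(\chevalley(\field))$ is a $\chevalley(\field)$-translate of the standard apartment $\Sigma$, so its fundamental class is a $\chevalley(\field)$-translate of $[\Sigma]$.

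The plan is to establish both the vanishing and the generation statement simultaneously by showing that $\Delta(\chevalley(\field))$ is homotopy equivalent to a wedge of $(\rk(\Phi) - 1)$-spheres, one for each apartment through the fundamental chamber $C$. Concretely, I would filter $\Delta(\chevalley(\field))$ by subcomplexes $\Delta_k$ spanned by chambers at gallery distance at most $k$ from $C$, and prove by induction on $k$ that $\Delta_k$ is homotopy equivalent to such a wedge, indexed by apartments through $C$ that are contained in $\Delta_k$. The inductive step would use the retraction $\rho$ onto a chosen apartment through $C$, centered at $C$: for each chamber $D$ at gallery distance exactly $k$ from $C$, its intersection with $\Delta_{k-1}$ is either a contractible union of codimension-one panels (in which case attaching $D$ preserves the homotopy type) or the entire boundary $\partial D$ (in which case a new top-dimensional sphere is born, realising a newly completed apartment through $C$). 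Passing to the colimit gives the claimed wedge decomposition. Once this is done, vanishing of $\tilde{H}_i$ for $i \neq \rk(\Phi) - 1$ is immediate, the top homology is generated by the apartment classes of apartments through $C$, and strong transitivity exhibits each such class as a $\chevalley(\field)$-translate of $[\Sigma]$.

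The main obstacle is the dichotomy in the inductive step, which is the technical heart of the Solomon--Tits argument and requires the standard gallery-and-retraction machinery for spherical buildings. One must verify carefully that a chamber $D$ at distance $k$ from $C$ whose boundary lies entirely in $\Delta_{k-1}$ corresponds to exactly one newly completed apartment through $C$, so that the bookkeeping of \emph{new} apartments remains injective across the filtration. This is classical and is carried out in detail in, for example, \cite[Chapter~4]{AB:Buildings}, so in the final write-up I would simply invoke the abstract Solomon--Tits theorem for spherical buildings and then translate the conclusion into the generation statement using \cref{thm:PropertiesBuilding} and strong transitivity of the $\chevalley(\field)$-action on apartment-chamber pairs.
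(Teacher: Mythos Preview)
The paper does not prove \cref{thm:SolomonTits}; it is stated as a classical background result attributed to Solomon and Tits, with a citation to \cite{Sol:Steinbergcharacterfinite}, and is then used as a black box. So there is no ``paper's own proof'' to compare against.

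Your proposal correctly identifies the standard Solomon--Tits argument (filtration by gallery distance from the fundamental chamber, contractible-versus-sphere dichotomy at each stage, strong transitivity to identify all apartment classes as translates of $[\Sigma]$), and your plan to ultimately invoke the abstract theorem from \cite[Chapter~4]{AB:Buildings} is exactly what the paper does implicitly. For the purposes of this paper, a one-line citation suffices; the detailed filtration argument you sketch is unnecessary here.
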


\begin{definition} \label{def:SteinbergApartments}
The $\mbZ[\chevalley(\field)]$-module $\St$ from \cref{thm:SolomonTits} is called the \emph{Steinberg module} for $\chevalley(\field)$. Its generators $g\cdot[\Sigma]$, $g\in \chevalley(\field)$, are called \emph{apartment classes}.
\end{definition}

\subsection{Actions of arithmetic subgroups on spherical buildings}

If $\field$ is a number field with ring of integers $\ring$ and $\chevalley=\mcG_\Phi^\Lambda$, the subgroup $\chevalley(\ring) \leq \chevalley(\field)$ is an example of an \emph{arithmetic subgroup} of $\chevalley(\field)$; cf.~\cite[Chapter~I.3]{Margulis}. Of course, $\chevalley(\ring)$ inherits from $\chevalley(\field)$ a natural action on the spherical building $\Delta(\chevalley(\field))$. Borel--Serre showed that this action reveals a lot of cohomological information about $\chevalley(\ring)$.
In particular, they proved the following.

\begin{theorem}[{Borel--Serre~\cite{BS:Cornersarithmeticgroups}}]
\label{thm_Borel_Serre_duality}
Let $\chevalley$ be a Chevalley--Demazure group scheme, and $\ring$ the ring of integers of an algebraic number field $\field$. Write $r$ for the (real) dimension of the symmetric space associated to $\chevalley(\ring \otimes_{\mbZ} \mbR)$. Then the virtual cohomological dimension of $\chevalley(\ring)$ is  $\vcd(\chevalley(\ring)) = r - \rk(\Phi)$ and, 
for every $i$, there is an isomorphism
\begin{equation*}
	H^{\vcd(\chevalley(\ring))-i}(\chevalley(\ring); \mbQ ) \cong H_i (\chevalley(\ring); \mbQ \otimes_{\mbZ} \St).
\end{equation*}
\end{theorem}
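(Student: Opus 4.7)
The plan is to implement Borel and Serre's original strategy of constructing a good contractible space on which $\Gamma := \chevalley(\ring)$ acts properly cocompactly, and then extracting a virtual duality structure from it.

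First I would construct the Borel--Serre bordification $\overline{X}$ of the symmetric space $X = G_\infty/K$, where $G_\infty = \chevalley(\ring \otimes_{\mbZ} \mbR)$ and $K \leq G_\infty$ is a maximal compact subgroup. Viewing $\Gamma$, via restriction of scalars, as an arithmetic subgroup of a reductive $\mbQ$-group, reduction theory allows one to attach a corner at infinity for each proper $\mbQ$-parabolic subgroup of $\chevalley$; the result is a contractible manifold with corners of dimension $r$ whose interior is $X$, on which $\Gamma$ acts properly with compact quotient. Passing to a torsion-free finite-index subgroup $\Gamma' \leq \Gamma$ (which exists by Selberg's lemma) yields a compact model for $B\Gamma'$, so $\vcd(\Gamma) = \vcd(\Gamma') < \infty$.

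Both the precise value of the vcd and the identification of the dualising module come from the boundary $\partial \overline{X}$. By construction, the combinatorics of $\partial \overline{X}$ is controlled by the poset of proper $\mbQ$-parabolic subgroups of $\chevalley$, and one shows that $\partial \overline{X}$ is $\Gamma$-equivariantly homotopy equivalent to $|\Delta(\chevalley(\field))|$. Since $\chevalley$ is $\mbQ$-split (being a Chevalley scheme) its $\mbQ$-rank, after restriction of scalars, equals $\rk(\Phi)$, so \cref{thm:SolomonTits} implies that $\partial \overline{X}$ has the homotopy type of a wedge of $(\rk(\Phi)-1)$-spheres with top reduced homology $\St$. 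Combining this connectivity with Poincar\'e--Lefschetz duality for the manifold with corners $(\overline{X}, \partial \overline{X})$ exhibits $\Gamma$ as a virtual duality group of dimension $r - \rk(\Phi)$ whose dualising module is isomorphic to $\St$.

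Finally, Bieri--Eckmann duality for virtual duality groups provides, for every $\mbQ[\Gamma]$-module $M$, functorial isomorphisms $H^{\vcd(\Gamma) - i}(\Gamma; M) \cong H_i(\Gamma; M \otimes_{\mbZ} \St)$; specialising $M = \mbQ$ yields the stated isomorphism. The hard part will be the reduction-theoretic construction of $\overline{X}$ together with the verification that its boundary is $\Gamma$-equivariantly equivalent to the rational Tits building; this is the technical core of Borel--Serre's paper, whereas the passage to the duality statement is then formal.
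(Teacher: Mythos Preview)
Your outline is correct and matches the paper's approach: the paper does not give a self-contained proof but simply cites Borel--Serre, noting that the result follows from their bordification of the symmetric space together with the fact that Weil restriction of scalars preserves parabolics. Your sketch is exactly a faithful expansion of that reference (bordification, identification of $\partial\overline{X}$ with the rational Tits building, Solomon--Tits, Poincar\'e--Lefschetz duality, Bieri--Eckmann duality), so nothing more is needed.
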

The theorem of Borel--Serre above follows from their construction of the bordification for the symmetric space attached to $\chevalley(\ring \otimes_{\mbZ} \mbR)$, and the fact that extension (resp.~Weil restriction) of scalars preserves parabolics; see \cite[Theorem~8.41 and Section~9]{BS:Cornersarithmeticgroups} and \cite[Section~1.7]{Margulis}.

\section{Vanishing of cohomology}

In this section we prove our main theorem. Recall that the action of $\chevalley(\ring)$ on $\Delta(\chevalley(\field))$ is induced by left multiplication on cosets of parabolics $g\mcP(\field) \subset \chevalley(\field)$.

\begin{definition}
An apartment class 
$[\mcA] \in \St$ is called \emph{integral} if it is a $\chevalley(\ring)$-translate of the standard apartment class $[\Sigma]$. That is, $[\mcA] = \gamma \cdot [\Sigma]$ for some $\gamma \in \chevalley(\ring)$.
\end{definition}

Generalising works of Ash--Rudolph~\cite{AR:modularsymbolcontinued} and Gunnells~\cite{Gun:Symplecticmodularsymbols}, T\'oth established the following. 

\begin{theorem}[{T\'oth~\cite{Toth2005}}]
\label{thm_Toth_integral_generation}
Suppose the Chevalley--Demazure group scheme $\chevalley$ is not of type $\tE_8$, $\tF$ or $\tG$. If the ring of integers $\ring$ is Euclidean, then the Steinberg module $\St$ for $\chevalley(\field)$ is generated  (as an abelian group) by \emph{integral} apartment classes. 
\end{theorem}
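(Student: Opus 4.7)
The plan is to generalise the Ash--Rudolph continued-fraction technique, which handles $\on{SL}_n$, to the list of admissible types. The key structural input that singles out these types, and that fails for $\tE_8$, $\tF$, $\tG$, is the existence of a maximal parabolic subgroup $\mcP \leq \chevalley$ whose unipotent radical $\mcV$ is abelian. This abelianness is what makes the Euclidean algorithm applicable componentwise on $\mcV(\field)$.

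First, I would cut down the generating set provided by Solomon--Tits (\cref{thm:SolomonTits}). Combining the Bruhat decomposition $\chevalley(\field) = \coprod_{w \in W_\Phi} \mcB(\field) \til{w} \mcB(\field)$ with the observations that (a) the extended Weyl group $\til{W}$ lies in $\chevalley(\ring)$, since its generators $w_\alpha$ are products of unipotent root elements with entries $\pm 1$; (b) the maximal torus $\mcH(\field)$ acts trivially on $\Sigma$ because it is contained in every standard parabolic; and (c) $\til{w}\cdot [\Sigma] = \pm [\Sigma]$ by \cref{lem:action-reverses-orientation} via \cref{thm:PropertiesBuilding}, one reduces to showing that $u\cdot [\Sigma]$ is a $\mbZ$-linear combination of integral apartment classes for each $u$ in the unipotent radical $\mcU(\field)$ of the standard Borel.

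Second, I would further reduce using the Levi decomposition of $\mcP$. Writing $\mcU(\field) = \mcV(\field)\cdot (\mcU \cap \mathcal{L})(\field)$ with $\mathcal{L}$ a Levi complement of $\mcP$, an inductive argument on $\rk(\Phi)$ lets one focus on factors in $\mcV(\field)$. Since $\mcV$ is abelian, an additive isomorphism $\mcV(\field) \cong \field^N$ is available, and one can attack the entries one at a time. Using the Euclidean algorithm on $\ring$, an Ash--Rudolph-type relation in $\St$ then expresses the class of an apartment whose entry has large Euclidean norm as a sum of apartment classes whose corresponding entry has strictly smaller norm, all of these sharing their other coordinates up to an integral unipotent translate. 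Descending induction on the Euclidean norm brings everything into $\mcV(\ring) \subset \chevalley(\ring)$ and closes the argument.

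The main obstacle is producing the Ash--Rudolph relation inside $\St$ uniformly across the admissible types. For $\on{SL}_n$ this relation has a clean frame-theoretic interpretation as the boundary of a single $2$-dimensional subspace spanned by two basis vectors of a frame; in the Chevalley setting the analogue must be read off from the combinatorics of how $\mcV(\field)$ acts on the chambers of $\Delta(\chevalley(\field))$. Concretely, one has to identify the right two-chamber configuration whose signed sum is supported in a single apartment and then track signs through \cref{lem:action-reverses-orientation}. The abelian hypothesis on $\mcV$ is used precisely to guarantee that successive Euclidean reductions do not interact with one another, so the bookkeeping actually closes. In types $\tE_8$, $\tF$, $\tG$, no maximal parabolic has abelian unipotent radical, the would-be reductions are obstructed by commutator corrections, and this argument breaks down --- matching the scope of the statement.
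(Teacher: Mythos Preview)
The paper does not prove this statement: it is quoted as a theorem of T\'oth~\cite{Toth2005} and used as a black box. The only commentary the paper adds is the remark following the statement, clarifying that T\'oth's result applies to $\mcG_\Phi^\Lambda$ for any lattice $\Lambda$ because Chevalley's commutator formulae are insensitive to $\Lambda$.

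Your outline is therefore not comparable to anything in the paper. That said, it is a reasonable high-level sketch of the strategy in \cite{Toth2005}, and in particular your identification of the structural reason for excluding $\tE_8$, $\tF$, $\tG$ --- the absence of a maximal parabolic with abelian unipotent radical --- matches what the paper reports about T\'oth's argument (see the discussion in the introduction citing \cite[Section~5]{Toth2005}). The reduction via Bruhat decomposition to unipotents, and then via a Levi decomposition to the abelian unipotent radical $\mcV$, is indeed the shape of the argument. However, your description of the ``Ash--Rudolph relation'' step is vague precisely where the real work lies: producing, uniformly in type, the explicit homology relation that trades a bad apartment class for ones with smaller Euclidean norm is the substance of T\'oth's paper, and your sketch does not indicate how this is actually carried out. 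If you were asked to supply a proof here rather than a citation, this is the part that would need to be filled in.
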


Although T\'oth uses the terminology `simple Chevalley groups' (which might be mistaken as adjoint), we remark that the schemes he considers are in fact of the form $\mcG_\Phi^\Lambda$ for any $\Lambda$; cf. \cite[second paragraph of Section~2]{Toth2005}. Moreover, it is straightforward to verify that his results do hold for all $\mcG_\Phi^\Lambda$ (except the excluded types $\tE_8$, $\tF$, $\tG$) since Chevalley's commutator formulae~\cite{ChevalleyTohoku} are valid for all $\mcG_\Phi^\Lambda$ regardless of the lattice $\Lambda$. 

The following is a key ingredient for our proof of \cref{thm_cohomology_vanishing_specific_types}. It can be seen as a building-theoretic generalisation of an argument used by Church--Farb--Putman in the setting of $\on{SL}_n$ \cite[Proof of Theorem C, last paragraph]{Church2019}.

\begin{proposition}
\label{lem_inverting_apartment_class} If the apartment class $[\mcA] \in \St$ is a translation of the standard apartment class by an element of the normaliser of $\chevalley(\ring)$ in $\chevalley(\field)$, then there exists $\gamma \in \chevalley(\ring)$ such that $\gamma \cdot [\mcA] = -[\mcA]$. 
\end{proposition}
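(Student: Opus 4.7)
The plan is to reduce everything to the Coxeter-complex statement that any simple reflection reverses the orientation of the standard apartment class (Corollary after Lemma on the spherical Coxeter complex). Since $[\mcA]$ is integral, by definition there exists some $g \in \chevalley(\ring)$ with $[\mcA] = g \cdot [\Sigma]$. The natural candidate for $\gamma$ is then a conjugate $\gamma = g \til w g^{-1}$ for an appropriately chosen $\til w \in \til W$, because then
\[
  \gamma \cdot [\mcA] = g \til w g^{-1} \cdot g \cdot [\Sigma] = g \cdot \bigl(\til w \cdot [\Sigma]\bigr),
\]
so the whole question reduces to finding $\til w \in \til W$ such that $\til w \cdot [\Sigma] = -[\Sigma]$ and such that $\til w$ actually lies in $\chevalley(\ring)$ (so that $\gamma \in \chevalley(\ring)$).

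For this, I would pick any simple root $\alpha \in \Delta$ and set $\til w := w_\alpha = x_\alpha(1) x_{-\alpha}(1)^{-1} x_\alpha(1)$, as defined in Section~\ref{sec:Chevalley}. Two things need to be checked. First, that $w_\alpha \in \chevalley(\ring)$: this is immediate since the unipotent root morphisms $\mfX_{\pm\alpha}$ are defined over $\mbZ$ and we are only evaluating them at $1 \in \ring$, so $w_\alpha \in \chevalley(\mbZ) \subseteq \chevalley(\ring)$. Second, that $w_\alpha$ acts on $[\Sigma]$ by $-1$: by \cref{thm:Weylgroup}, the image of $w_\alpha \mcH(\field)$ under $\isom$ is exactly the Coxeter generator $s_\alpha \in S$ of $W_\Phi$, and by \cref{thm:PropertiesBuilding} the action of $w_\alpha$ on $\Sigma$ is identified via $\isomtwo$ with the action of $s_\alpha$ on $\Sigma(W_\Phi, S)$. \cref{lem:action-reverses-orientation} then gives $s_\alpha \cdot [\Sigma(W_\Phi, S)] = -[\Sigma(W_\Phi, S)]$, which translates via $\isomtwo_*^{-1}$ back to $w_\alpha \cdot [\Sigma] = -[\Sigma]$.

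Putting the pieces together with $\gamma := g w_\alpha g^{-1} \in \chevalley(\ring)$, we obtain
\[
  \gamma \cdot [\mcA] = g \cdot \bigl(w_\alpha \cdot [\Sigma]\bigr) = g \cdot \bigl(-[\Sigma]\bigr) = -[\mcA],
\]
as required. I do not expect a serious obstacle here: the proof is essentially bookkeeping through the identifications $\til W \to W_\Phi$ and $\Sigma \to \Sigma(W_\Phi, S)$ provided by \cref{thm:Weylgroup,thm:PropertiesBuilding}, combined with the fact that the element $w_\alpha$ is defined by a $\mbZ$-integral formula. The only mild subtlety worth flagging is that integrality of $\gamma$ relies crucially on $w_\alpha$ involving only $x_{\pm\alpha}(1)$ (i.e.\ the value $1 \in \ring$) rather than arbitrary ring elements, which is automatic from Steinberg's definition.
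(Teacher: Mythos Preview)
Your proof is correct and follows essentially the same route as the paper: write $[\mcA] = g\cdot[\Sigma]$, lift a simple reflection $s_\alpha$ to an element of $\chevalley(\ring)$, use \cref{thm:PropertiesBuilding} and \cref{lem:action-reverses-orientation} to see that this lift negates $[\Sigma]$, and conjugate by $g$. If anything, your version is slightly more explicit in naming the lift as $w_\alpha = x_\alpha(1)x_{-\alpha}(1)^{-1}x_\alpha(1)$ and spelling out why it lies in $\chevalley(\ring)$, whereas the paper simply invokes \cref{thm:Weylgroup} over $\ring$ to produce an unspecified lift $\gamma_2$.
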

\begin{proof}
Let $[\mcA] = \gamma_1 \cdot [\Sigma]$ with $\gamma_1 \in N_{\chevalley(\field)}(\chevalley(\ring))$ and $[\Sigma]$ the standard apartment class be given.
Let $W_\Phi$ be the Coxeter group associated to $\Phi$, and choose a Coxeter generator $s_\alpha \in W_\Phi$ (hence a reflection of the underlying Euclidean space $\mbR^{\rk(\Phi)}$). By \cref{thm:Weylgroup}, we can find a lift of $s_\alpha$ in $\chevalley(\ring)$, i.e., there exists $\gamma_2 \in \chevalley(\ring)$ such that $\gamma_2 \mcH(\ring)$ maps to $s_\alpha$ under the isomorphism $$\isom:  N(\mcH)(\ring)/\mcH(\ring) \to W_\Phi.$$ By \cref{thm:PropertiesBuilding}, we have 
\begin{equation*}
	\gamma_2 \cdot [\Sigma] = \gamma_2 \cdot \isomtwo^{-1}_*([\Sigma(W_\Phi, S)]) = \isomtwo^{-1}_*(s_\alpha \cdot [\Sigma(W_\Phi, S)]).
\end{equation*}
As $s_\alpha$ reverses orientation by \cref{lem:action-reverses-orientation}, we have $s_\alpha \cdot [\Sigma(W_\Phi, S)]=-[\Sigma(W_\Phi, S)]$.

Now define $\gamma \coloneqq \gamma_1 \gamma_2 \gamma_1^{-1}$, which lies in $\chevalley(\ring)$ because $\gamma_1$ normalises it. Putting the above together, it follows that 
\begin{equation*}
	\gamma \cdot [\mcA] = (\gamma_1 \gamma_2 \gamma_1^{-1}) \cdot (\gamma_1 \cdot [\Sigma]) = (\gamma_1 \gamma_2) \cdot [\Sigma] = \gamma_1\cdot(\gamma_2\cdot[\Sigma]) = \gamma_1 \cdot(-[\Sigma]) = -[\mcA],
\end{equation*}
as desired. 
\end{proof}

In what follows we always consider $\mbQ$ to be a trivial $G$-module, where $G$ is either $\chevalley(\ring)$ or $\chevalley(\field)$.

\begin{theorem}
\label{thm_cohomology_vanishing_general}
Let $\field$ be a number field, $\ring$ the ring of integers of $\field$ and $\chevalley$ a Chevalley--Demazure group scheme. If the Steinberg module $\St$ for $\chevalley(\field)$ is generated by integral apartment classes, then 
\begin{equation*}
	H^{\vcd(\chevalley(\ring))}(\chevalley(\ring); \mbQ) = 0.
\end{equation*}
\end{theorem}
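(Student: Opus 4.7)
The plan is to combine Borel--Serre duality with the inversion trick of \cref{lem_inverting_apartment_class}. The first step is to apply \cref{thm_Borel_Serre_duality} with $i=0$ in order to translate the top-dimensional cohomology statement into a statement about the zeroth homology group with twisted coefficients, namely
\begin{equation*}
  H^{\vcd(\chevalley(\ring))}(\chevalley(\ring); \mbQ) \;\cong\; H_0(\chevalley(\ring); \mbQ \otimes_\mbZ \St) \;\cong\; (\mbQ \otimes_\mbZ \St)_{\chevalley(\ring)},
\end{equation*}
the module of $\chevalley(\ring)$-coinvariants. It therefore suffices to show that this coinvariant module is trivial.

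Next, I would exploit the hypothesis that $\St$ is generated (over $\mbZ[\chevalley(\field)]$) by integral apartment classes. In the coinvariant module $(\mbQ \otimes_\mbZ \St)_{\chevalley(\ring)}$, every element of $\chevalley(\ring)$ acts trivially; but since integral apartment classes are precisely the $\chevalley(\ring)$-translates of $[\Sigma]$, all of them are identified with the class of $1\otimes[\Sigma]$ in the coinvariants. Hence the coinvariant module is in fact generated as a $\mbQ$-vector space by the single class of $1\otimes[\Sigma]$.

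Finally, I would invoke \cref{lem_inverting_apartment_class} applied to the integral apartment class $[\Sigma]$ itself: there exists $\gamma \in \chevalley(\ring)$ with $\gamma\cdot[\Sigma] = -[\Sigma]$. Passing to the coinvariants, the relation $\gamma\cdot (1\otimes[\Sigma]) = 1\otimes[\Sigma]$ forces $1\otimes[\Sigma] = -\,1\otimes[\Sigma]$, i.e. $2\cdot(1\otimes[\Sigma]) = 0$. Since we have rational coefficients, $2$ is invertible, so $1\otimes[\Sigma] = 0$ in the coinvariants. Combined with the previous paragraph, this means $(\mbQ \otimes_\mbZ \St)_{\chevalley(\ring)} = 0$, which completes the proof.

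The only potential subtlety is making sure the identification of integral apartment classes in the coinvariants is done correctly: one must check that tensoring with $\mbQ$ commutes with taking coinvariants (which is routine since $\mbQ$ is flat) and that the $\chevalley(\field)$-action on $\St$ indeed restricts to the $\chevalley(\ring)$-action used in Borel--Serre duality. Neither of these is a genuine obstacle; the substantive content of the argument is the inversion step supplied by \cref{lem_inverting_apartment_class}, and everything else is formal.
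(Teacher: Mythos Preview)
Your proof is correct and follows essentially the same route as the paper's: Borel--Serre duality reduces to coinvariants, the hypothesis lets you reduce to the single generator $1\otimes[\Sigma]$, and \cref{lem_inverting_apartment_class} kills it. One small slip worth fixing: your parenthetical ``(over $\mbZ[\chevalley(\field)]$)'' is wrong---generation over $\mbZ[\chevalley(\field)]$ by integral apartment classes is automatic from Solomon--Tits and says nothing; the actual hypothesis (and what your argument uses) is that $\St$ is generated as a $\mbZ$-module, equivalently as a $\mbZ[\chevalley(\ring)]$-module, by integral apartment classes.
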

\begin{proof}
By \cref{thm_Borel_Serre_duality}, there is an isomorphism
\begin{equation*}
	H^{\on{vcd}(\chevalley(\ring))}(\chevalley(\ring); \mbQ ) \cong H_0 (\chevalley(\ring); \mbQ \otimes_{\mbZ} \St).
\end{equation*}
The right hand side, in turn, is isomorphic to the module of co-invariants of the $\mbQ[\chevalley(\ring)]$-module $\mbQ \otimes_{\mbZ} \St$,
\begin{equation*}
    H_0 (\chevalley(\ring); \mbQ \otimes_{\mbZ} \St) \cong (\mbQ \otimes_{\mbZ} \St)_{\chevalley(\ring)} \cong \mbQ \otimes_{\mbZ[\chevalley(\ring)]} \St;
\end{equation*}
see, for instance, \cite[Chapter~III.1]{BrownCohomology}. Since $\St$ is assumed to be generated by \emph{integral} apartment classes, it therefore suffices to prove that for every integral apartment class $[\mcA] \in \St$ and every $q\in \mbQ$, we have
\begin{equation*}
	q \otimes_{\mbZ \chevalley(\ring)} [\mcA] = 0 \in \mbQ \otimes_{\mbZ [\chevalley(\ring)]} \St.
\end{equation*}
Let $q$ and $[\mcA]$ be given, with $[\mcA]$ integral. By \cref{lem_inverting_apartment_class}, we can find $\gamma\in \chevalley(\ring)$ such that $\gamma [\mcA] = -[\mcA]$. As $\chevalley(\ring)$ acts trivially on $\mbQ$, it follows that
\begin{equation*}
	q \otimes_{\mbZ \chevalley(\ring)} [\mcA] = q \cdot \gamma \otimes_{\mbZ \chevalley(\ring)} [\mcA] = q \otimes_{\mbZ \chevalley(\ring)} \gamma\cdot [\mcA] = -(q \otimes_{\mbZ \chevalley(\ring)} [\mcA]).
\end{equation*}
Hence $q \otimes_{\mbZ \chevalley(\ring)} [\mcA] = 0$ since $\mathrm{char}(\mbQ) = 0$, which concludes the proof.
\end{proof}

\begin{remark}
The proof of \cref{thm_cohomology_vanishing_general} actually shows that if $\St$ is generated by integral apartment classes, then $H_0 (\chevalley(\ring); R \otimes_{\mbZ} \St) = 0$ for all rings $R$ in which $2$ is invertible.
\end{remark}

\begin{proof}[Proof of \cref{thm_cohomology_vanishing_specific_types}]
By Theorem~\ref{thm_Toth_integral_generation}, the Steinberg modules for the Chevalley groups $\chevalley(\field)$ of the given types are generated by integral apartment classes. The claim thus follows from Theorem~\ref{thm_cohomology_vanishing_general}.
\end{proof}

\bibliographystyle{halpha}
\bibliography{mybibliography}

\end{document}